\documentclass[11pt, reqno]{amsart}       
%
%
\usepackage{graphicx}

\usepackage{amsmath}

\usepackage{amssymb}
\usepackage{pifont}
\newtheorem{thm}{Theorem}
\newtheorem*{thm1}{Theorem}
\newtheorem{lem}{Lemma}

\newtheorem{cor}{Corollary}
\theoremstyle{definition}

\newtheorem*{rem}{Remarks}

\newtheorem*{rem2}{Remark}
\newtheorem*{example}{Example}

\newcommand{\vertiii}[1]{{\left\vert\kern-0.25ex\left\vert\kern-0.25ex\left\vert
		#1 \right\vert\kern-0.25ex\right\vert\kern-0.25ex\right\vert}}

\def \lim   {\text {\rm lim}}

\begin{document}
	\title{Uncertainty principles on $C^*$-algebras}

	\author{Saptak Bhattacharya}
	
	\address{Indian Statistical Institute\\
		New Delhi 110016\\
		India}
	\email{saptak21r@isid.ac.in}
	
	
	
	
	\begin{abstract}In this paper we prove some uncertainty bounds for commutators and anti-commutators of observables in a $C^*$-algebra. We give a short, elementary proof of Robertson's Standard Uncertaity Principle in this setting. We also prove some other uncertainty relations for which the lower bound doesn't vanish for any number of observables.\end{abstract}
	
	\subjclass[2010]{ 81P16, 15A45}
	\keywords{uncertainty principles, matrix inequalities}
	
	\date{}
	\maketitle
	\vspace{2mm}
	\section{Introduction}
	Let $\mathcal{A}$ be a unital $C^*$-algebra and let $\phi:\mathcal{A}\to\mathbb{C}$ be a state. Let $\{x_j\}_{j=1}^n$ be self-adjoint elements of $\mathcal{A}$. We normalize them so that $\phi(x_j)=0$ for all $1\leq j\leq n$. The {\it covariance matrix} of the $n$-tuple $\tilde{x}=(x_j)_{j=1}^n$ is the $n\times n$ positive matrix given by \[\mathrm{Cov}_{\phi}(\tilde{x})=\big(\mathrm{Re}\hspace{1mm}\phi(x_i x_j)\big).\] This can also be written as \[\mathrm{Cov}_{\phi}(\tilde{x})={M+M^T}\] where \[M=\frac{1}{2}\big(\phi(x_ix_j)\big)\label{e1}\tag{1}\] is a Gram matrix. Note that \[M-M^T=\frac{1}{2}\big(\phi[x_i, x_j]\big)\] and \[M+M^T=\mathrm{Cov}_{\phi}(\tilde{x})=\frac{1}{2}\big(\phi\{x_i,x_j\}\big)\] where $[x_i, x_j]=x_i x_j-x_j x_i$ is the commutator and $\{x_i, x_j\}=x_ix_j+x_jx_i$ is the anti-commutator of $x_i$ and $x_j$. In this form, Robertson's {\it Standard Uncertainty Principle}  (see \cite{rob}) can be stated as : \[\mathrm{det}|M-M^T|\leq\mathrm{det}\hspace{1mm}\mathrm{Cov}_{\phi}(\tilde{x})\label{e2}\tag{2}\] Note that for two self-adjoint elements $x$ and $y$, this reduces to the well-known Schr\"{o}dinger-Heisenberg uncertainty principle (\cite{sh}) :
	\[\phi(x^2)\phi(y^2)-|\mathrm{Re}\hspace{1mm}\phi(yx)|^2\geq\frac{1}{4}|\phi[x, y]|^2.\]

	Though Robertson's SUP generalizes Heisenberg's uncertainty principle for $n$ observables, it has a downside. Note that $i(M-M^T)$ is a real skew-symmetric matrix, and therefore, if $n$ is odd, its determinant vanishes. Thus, for an odd number of observables, inequality \eqref{e2} fails to capture any quantum phenomenon arising from non-commutativity. This prompted research on uncertainty relations which do not have this drawback. Andai in \cite{an} and Gibilisco, Imparato and Isola in \cite{gi} proved the following :
	\vspace{2mm}
	
	\begin{thm1}Let $A$ be a positive definite density matrix. Let $\{H_j\}_{j=1}^k$ be Hermitian and let $f:(0,\infty)\to\mathbb{R}$ be a positive operator monotone function satisfying $f(1)=1$ and $xf(\frac{1}{x})=f(x)$ for all $x\in (0,\infty)$. Then \[\mathrm{det}\hspace{1mm}\big(\frac{f(0)}{2}\gamma_f([A, H_i],[A, H_j])\big)\leq\mathrm{det}\hspace{1mm}\big(\mathrm{Re}\hspace{1mm}\mathrm{tr}(AH_i H_j)\big).\label{e3}\tag{3}\]\end{thm1} Here $\gamma_f$ denotes the quantum Fisher information metric (see \cite{pet, pet1}) associated to $f$. Inequality \ref{e3} is also known as the {\it Dynamical Uncertainty Principle}. This was later generalized to arbitrary covariances by Gibilisco, Hiai and Petz in \cite{ghp}. The question arises, can we have variants of the standard uncertainty principle involving the commutators $[x_i, x_j]$ with non-vanishing lower bound regardless of the number of observables? This paper is an attempt in that direction. The sum and difference of positive matrices play a key role here, utilizing which we shall prove some uncertainty bounds. Lastly, for $k$ Hermitian matrices $\{H_j\}_{j=1}^k$, we discuss the problem of comparing the block matrices $\big([H_i, H_j]\big)$ and $\big(\{H_i, H_j\}\big)$ without composing a state entrywise, and prove an interesting trace inequality. It is to be noted that in this case, we cannot write both sides as the sum and difference of two positive matrices, prompting the use of other techniques. 
	\section{Main results}
	
	We first give a short proof of  Robertson's SUP. For that we need a lemma.
	\begin{lem}\label{l1} Let $A, B$ be two positive matrices. Then \[ \mathrm{det}(A-B)^2\leq  \mathrm{det}(A+B)^2.\]
	\end{lem}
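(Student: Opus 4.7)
The plan is to reduce the squared inequality to its unsquared form $|\det(A-B)| \leq \det(A+B)$, and prove this by simultaneously normalizing $A$ and $B$ with respect to $A+B$. Once such a normalization is done, the inequality boils down to a trivial eigenvalue estimate.

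First I would handle the case when $A+B$ is invertible, which is the main case. Set $S = (A+B)^{1/2}$ (which is positive and invertible) and $T = S^{-1} A S^{-1}$. Since $0 \leq A \leq A+B$, conjugating by $S^{-1}$ preserves the order, giving $0 \leq T \leq I$; in particular all eigenvalues of $T$ lie in $[0,1]$. The key computation is
\[ S^{-1}(A-B)S^{-1} = S^{-1}\bigl(2A - (A+B)\bigr)S^{-1} = 2T - I, \]
whose eigenvalues therefore lie in $[-1,1]$. Taking determinants yields $|\det(2T-I)| \leq 1$, and multiplying through by $\det(S)^{2} = \det(A+B)$ gives $|\det(A-B)| \leq \det(A+B)$. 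Squaring delivers the statement.

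It remains to address the degenerate case when $A+B$ is singular. Pick any $v \in \ker(A+B)$; then $\langle Av,v\rangle + \langle Bv,v\rangle = 0$, and since $A,B \geq 0$ both terms vanish, so $Av = Bv = 0$ and $v \in \ker(A-B)$. Hence both $\det(A-B)$ and $\det(A+B)$ are zero and the inequality holds trivially.

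There is no serious obstacle here: the whole argument rests on the observation that the map $T \mapsto 2T-I$ simultaneously relates $A-B$ and $A+B$ after a single similarity by $(A+B)^{-1/2}$, converting a determinant comparison into the elementary bound $|1-2t| \leq 1$ for $t \in [0,1]$.
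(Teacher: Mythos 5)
Your proof is correct. It follows the same broad strategy as the paper's --- a congruence normalization followed by an elementary eigenvalue estimate --- but with a different pivot: the paper conjugates by $A^{-1/2}$, reducing the claim to $\det(I-C)^2\leq\det(I+C)^2$ for the positive matrix $C=A^{-1/2}BA^{-1/2}$ (i.e.\ to $(1-\lambda)^2\leq(1+\lambda)^2$ for $\lambda\geq 0$), whereas you conjugate by $(A+B)^{-1/2}$, reducing it to $|\det(2T-I)|\leq 1$ for the contraction $0\leq T\leq I$ (i.e.\ to $|1-2t|\leq 1$ for $t\in[0,1]$). The practical gain of your choice is symmetry in $A$ and $B$ and a clean, explicit treatment of the degenerate case: when $A+B$ is singular you show directly that a common kernel vector kills both determinants, whereas the paper's normalization requires $A$ to be invertible and so begins with the (unjustified but standard) reduction ``suffices to prove for positive definite $A$ and $B$,'' implicitly appealing to a continuity argument. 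Your version also isolates the stronger unsquared inequality $|\det(A-B)|\leq\det(A+B)$, which is what is really being proved in either case.
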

	\begin{proof}Suffices to prove for positive definite $A$ and $B$. If $A$ and $B$ commute, the inequality is obvious. Otherwise, we note that \[\begin{aligned}&\textrm{det}(A-B)^2\\&=\textrm{det}\big(A^{1/2}(I-A^{-1/2}BA^{-1/2})A^{1/2}\big)^{2}\\&=\textrm{det}A^2\hspace{1mm}\textrm{det}(I-A^{-1/2}BA^{-1/2})^{2} \\&\leq \textrm{det} A^2\hspace{1mm}\textrm{det}(I+A^{-1/2}BA^{-1/2})^{2} \\&=\textrm{det}(A+B)^2.\end{aligned}\]\end{proof}
	\begin{rem2}Note that the operator inequality $(A-B)^2\leq(A+B)^2$ need not be true for positive $A$ and $B$ unless $A$ and $B$ commute.\end{rem2}
	
	As an immediate consequence we have Robertson's SUP.
	\begin{thm}\label{t1} Let $\{x_i\}_{i=1}^n$ be self-adjoint elements in a $C^*$-algebra $\mathcal{A}$. Let $\phi:\mathcal{A}\to\mathbb{C}$ be a state. Then, \[\mathrm{det}\big(\frac{1}{2}\phi[x_i, x_j])^2\leq\mathrm{det}\hspace{1mm}\mathrm{Cov}_{\phi}(\tilde{x})^2.\]\end{thm}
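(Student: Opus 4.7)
The strategy is to recognize that the theorem falls out of Lemma~\ref{l1} applied to the right pair of positive matrices. With the setup in the introduction, the inequality to prove is $\det(M - M^T)^2 \leq \det(M + M^T)^2$, where $M = \tfrac{1}{2}\bigl(\phi(x_i x_j)\bigr)$. So Lemma~\ref{l1} gives exactly what is needed \emph{provided} both $M$ and $M^T$ are positive semidefinite. The plan is therefore to verify these two positivity statements and then invoke the lemma.

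First I would check that $M$ itself is positive. Since the $x_i$ are self-adjoint, for any $c = (c_i) \in \mathbb{C}^n$ one has
\[
\sum_{i,j} \bar c_i c_j \, \phi(x_i x_j) = \phi\!\left(\Bigl(\sum_i c_i x_i\Bigr)^{\!*}\!\Bigl(\sum_j c_j x_j\Bigr)\right) \geq 0,
\]
using positivity of the state $\phi$. In particular $M$ is Hermitian, and consequently $M^T = \overline{M}$. The positivity of $M^T$ then follows by taking complex conjugates of the inequality $v^* M v \geq 0$ for all $v$, so $\overline{M}$ is also positive.

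With both $M$ and $M^T$ positive, Lemma~\ref{l1} applies directly and yields
\[
\det(M - M^T)^2 \leq \det(M + M^T)^2.
\]
Finally I would identify the two sides with the quantities in the statement using the decomposition already recorded in the introduction: $M - M^T = \tfrac{1}{2}\bigl(\phi[x_i, x_j]\bigr)$ and $M + M^T = \mathrm{Cov}_\phi(\tilde{x})$.

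There is no real obstacle here; the work has been front-loaded into Lemma~\ref{l1}. The only thing to be a little careful about is to not confuse ``positive matrix'' with ``positive-entried matrix'' and to note that Hermiticity of $M$ is what makes the transpose equal to the complex conjugate, which is what lets us transfer positivity from $M$ to $M^T$ without any further hypothesis on the state or on commutation relations among the $x_i$.
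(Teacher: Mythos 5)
Your proof is correct and follows exactly the paper's route: the paper's own proof of Theorem~\ref{t1} is the one-line instruction to apply Lemma~\ref{l1} to the Gram matrix $M$ of \eqref{e1} and its transpose. You have simply made explicit the (correct) verification that $M$ and $M^T=\overline{M}$ are positive semidefinite, which the paper leaves implicit by calling $M$ a Gram matrix.
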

	\begin{proof} Consider the positive matrix $M$ as given in \eqref{e1} and apply lemma \ref{l1}. 
		
	\end{proof}
	\vspace{3mm}
	
	Before we state the next lemma, let us have a quick review of the matrix geometric mean. Given two positive matrices $A$ and $B$, their geometric mean is given by \[A\# B = A^{1/2}(A^{-1/2}BA^{-1/2})^{1/2}A^{1/2}.\] Let $\mathbb{P}(n)$ be the set of all $n\times n$ positive definite matrices equipped with the Riemannian metric \[\langle H, K \rangle_A = \mathrm{tr\hspace{1mm}}(A^{-1}HA^{-1}K)\label{e4}\tag{4}\] for all $A\in\mathbb{P}(n)$, and Hermitian $H$ and $K$. It can be shown that for $A, B\in\mathbb{P}(n)$, $A\# B$ is precisely the Riemannian mean of $A$ and $B$ with respect to the metric \eqref{e4}. Infact, the geodesic joining $A$ and $B$ is given explicitly by $$t\to A^{1/2}(A^{-1/2}BA^{-1/2})^tA^{1/2}$$ where $t\in[0,1]$.
	An important variational characterization of $A\# B$ is given by \[A\#B = \mathrm{max}\{X:X^*=X, \begin{pmatrix}A & X\\X & B\end{pmatrix}\geq O\}\label{e5}\tag{5}.\] We shall be using this in the proof of our next result.
	\\\\
	The geometric mean is extremely important and finds wide applications in matrix inequalities, quantum information, and even in medical imaging. See \cite{rb, rb1, ll, hia, jen, bh2} for more details.
	
	\begin{lem}\label{l2} Let $A, B$ be positive matrices. Then there exists a unitary $U$ such that \[|A-B|\leq (A+B)\# U(A+B)U^*.\] \end{lem}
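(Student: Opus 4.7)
The plan is to use the variational characterization \eqref{e5} of the geometric mean to reduce the lemma to an operator inequality, establish a unitary-free sub-estimate, and finally extract the required unitary from the polar decomposition of $A-B$. Write $S=A+B$ and $D=A-B$, and assume (by a standard perturbation/continuity argument, since the geometric mean and the absolute value are continuous on positive matrices) that both $S$ and $D$ are invertible. By \eqref{e5} together with the Schur complement, $|D|\leq S\,\#\,USU^{*}$ is equivalent to the block positivity of $\begin{pmatrix}S & |D|\\ |D| & USU^{*}\end{pmatrix}$, which in turn is equivalent to $|D|\,S^{-1}\,|D|\leq USU^{*}$. So the task reduces to producing a unitary $U$ for which this last inequality holds.

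The pivotal preliminary estimate is the unitary-free bound $DS^{-1}D \leq S$. To see it, set $E=S^{-1/2}DS^{-1/2}$. Since $2A=S+D\geq 0$ and $2B=S-D\geq 0$, conjugating by $S^{-1/2}$ gives $I\pm E\geq 0$, i.e.\ $-I\leq E\leq I$, and hence $E^{2}\leq I$. Multiplying on both sides by $S^{1/2}$ yields $DS^{-1}D=S^{1/2}E^{2}S^{1/2}\leq S$. Now let $V$ be the self-adjoint unitary sign of $D$ from the polar decomposition $D=V|D|$, so that $V^{2}=I$ and $VD=DV=|D|$. Conjugating the preceding estimate by $V$ gives
\[
|D|\,S^{-1}\,|D|\;=\;V(DS^{-1}D)V\;\leq\;VSV\;=\;VSV^{*},
\]
which is exactly the desired inequality with $U=V$.

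The conceptual obstacle is that one might hope to take $U=I$ and prove the cleaner inequality $|D|\,S^{-1}\,|D|\leq S$ directly; but this fails in general, since the sign $V$ of $A-B$ need not commute with $A+B$ (as foreshadowed by the remark after Lemma \ref{l1}). The key insight is that the ``right'' unitary is precisely this polar phase, after which the inequality drops out from the unitary-free estimate simply by conjugation.
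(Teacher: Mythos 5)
Your proof is correct and is essentially the paper's argument rewritten in Schur-complement form: the paper observes directly that $\begin{pmatrix}A+B & A-B\\ A-B & A+B\end{pmatrix}=\begin{pmatrix}1&1\\1&1\end{pmatrix}\otimes A+\begin{pmatrix}1&-1\\-1&1\end{pmatrix}\otimes B\geq O$ and conjugates this block matrix by $\mathrm{diag}(I,U)$ with $U$ the polar phase of $A-B$, which is precisely your inequality $DS^{-1}D\leq S$ conjugated by $V$, followed by the identical appeal to \eqref{e5}. Your contraction argument for $DS^{-1}D\leq S$ and the invertibility reduction are both sound (the latter is even easier than you suggest, since $V$ depends only on $D$ and the geometric mean of shifted arguments decreases to the semidefinite limit), though the paper's block-matrix formulation sidesteps inverses entirely.
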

	\begin{proof}Note that \[\begin{pmatrix}A+B & A-B\\A-B & A+B\end{pmatrix}=\begin{pmatrix}1 & 1\\1 & 1\end{pmatrix}\otimes A +\begin{pmatrix}1 & -1\\-1 & 1\end{pmatrix}\otimes B\] which is positive. Let \[A-B=U|A-B|\] be the polar decomposition of $A-B$. Note that $U$ commutes with $A-B$ and hence, $|A-B|$ since $A-B$ is Hermitian. Observe that
		\vspace{2mm}
		
		\[\begin{pmatrix}A+B & |A-B|\\|A-B| & U(A+B)U^*\end{pmatrix}=\begin{pmatrix} I & O\\ O & U\end{pmatrix}\begin{pmatrix}A+B & A-B\\A-B & A+B\end{pmatrix}\begin{pmatrix} I & O\\ O & U^*\end{pmatrix}.\]
		\vspace{2mm}
		
		Hence, the matrix \[\begin{pmatrix}A+B & |A-B|\\|A-B| & U(A+B)U^*\end{pmatrix}\] is positive and therefore, \[|A-B|\leq (A+B)\# U(A+B)U^*\] by \eqref{e5}.
	\end{proof}

	As an corollary, we have :
	
	\begin{cor}\label{c1} Let $\{x_j\}_{j=1}^n$ be self-adjoint elements in a $C^*$-algebra $\mathcal{A}$ and let $\phi:\mathcal{A}\to\mathbb{C}$ be a state. Then for any unitarily invariant norm $|||.|||$ on $M_n(\mathbb{C})$, \[|||\big(\frac{1}{2}\phi[x_i, x_j]\big)|||\leq |||\mathrm{Cov}_{\phi}(\tilde{x})|||.\]\end{cor}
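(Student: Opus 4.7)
\smallskip

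\noindent\textbf{Proof plan.} The plan is to exploit the identities recorded in the introduction, which express $\bigl(\tfrac{1}{2}\phi[x_i,x_j]\bigr)$ and $\mathrm{Cov}_\phi(\tilde{x})$ as the antisymmetric and symmetric parts $M - M^T$ and $M + M^T$ of the positive Gram matrix $M$ defined in \eqref{e1}. With this in hand, the corollary reduces to the purely matrix-theoretic claim that $|||A - B||| \leq |||A + B|||$ for any two positive matrices $A, B$, applied to $A = M$ and $B = M^T$. Note that $M^T$ is positive whenever $M$ is, since for self-adjoint $M$ one has $\overline{M} = M^T$, and the complex conjugate of a positive matrix is positive.

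The first step is to invoke Lemma \ref{l2} with $A = M$ and $B = M^T$ to produce a unitary $U$ satisfying
\[
|M - M^T| \leq (M + M^T) \,\#\, U(M+M^T)U^*.
\]
The second step is to combine this with the operator arithmetic--geometric mean inequality $X \,\#\, Y \leq \tfrac{1}{2}(X + Y)$, a standard property of the Kubo--Ando geometric mean, which yields
\[
|M - M^T| \leq \tfrac{1}{2}\bigl((M+M^T) + U(M+M^T)U^*\bigr).
\]

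The final step is to pass to the unitarily invariant norm. Since both sides of the last inequality are positive, Weyl monotonicity of singular values gives $|||\,|M - M^T|\,||| \leq |||\tfrac{1}{2}\bigl((M+M^T) + U(M+M^T)U^*\bigr)|||$; then the identity $|||\,|Z|\,||| = |||Z|||$ (a matrix and its absolute value share singular values), the triangle inequality, and the unitary invariance of $|||\cdot|||$ collapse the right-hand side to $|||M+M^T||| = |||\mathrm{Cov}_\phi(\tilde{x})|||$, giving the conclusion. I do not foresee any serious obstacle; essentially all of the work has been done in Lemma \ref{l2}, and the only point requiring a moment's care is the verification that both $M$ and $M^T$ are positive, so that the lemma applies.
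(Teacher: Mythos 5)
Your proposal is correct and follows essentially the same route as the paper: apply Lemma \ref{l2} to $A=M$, $B=M^T$, dominate the geometric mean by the arithmetic mean, and then use unitary invariance together with $|||\,|Z|\,|||=|||Z|||$ and the triangle inequality. The only additions are the explicit check that $M^T$ is positive and the mention of Weyl monotonicity, both of which the paper leaves implicit.
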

	\begin{proof} Recall that $$M+M^T=\mathrm{Cov}_{\phi}(\tilde{x})$$ and $$M-M^{t}=\big(\frac{1}{2}\phi[x_i, x_j]\big).$$ By lemma \ref{l2} there exists a unitary $U$ such that \[\begin{aligned}&|M-M^T|\\&\leq (M+M^T)\# U(M+M^T)U^*\\&\leq\frac{(M+M^T+U(M+M^T)U^*)}{2}.\end{aligned}\] Thus, \[\begin{aligned}&|||M-M^T|||\\&=|||\hspace{1mm}|M-M^T|\hspace{1mm}|||\\&\leq|||M+M^T|||\\&=|||\mathrm{Cov}_{\phi}(\tilde{x})|||.\end{aligned}\]
	\end{proof}
	
	\begin{rem2} Taking the Frobenius norm $||.||_2$ in corollary \ref{c1}, we have \[\frac{1}{2}\sum_{i\textless j}|\phi[x_i, x_j]|^2\leq ||\mathrm{Cov}_{\phi}(\tilde{x})||_2^2\leq\big(\mathrm{tr}\hspace{1mm}\mathrm{Cov}_{\phi}(\tilde{x})\big)^2\leq\big[\sum_j\mathrm{Var}(x_j)\big]^2\label{e6}\tag{6}\]\end{rem2} This gives a non-vanishing lower bound of the sum of variances in terms of the Lie brackets of the observables, leading to an uncertainty relation.
	
	Given a positive matrix $A=(a_{ij})$ it is well known that \[\mathrm{det}\hspace{1mm}A\leq\prod_j a_{jj}.\] Using this, we see that \[\mathrm{det}|M-M^T|\leq\prod_j\langle |M-M^T|e_j, e_j\rangle.\] The right hand side is non-vanishing and we would like to have an upper bound for it in terms of $\textrm{Cov}_{\phi}(\tilde{x})$. We need the following inequality : 
	\begin{thm}\label{t2} Let $A, B\in M_n(\mathbb{C})$ be positive matrices. Then \[\big[\prod_j\langle |A-B|e_j, e_j\rangle\big]^{2/n}\leq \frac{\mathrm{tr}\hspace{1mm}A+\mathrm{tr}\hspace{1mm} B}{n}\big[\prod_j\langle(A+B)e_j, e_j\rangle\big]^{1/n}.\]\end{thm}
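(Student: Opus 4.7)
The plan is to combine Lemma \ref{l2} with a diagonal-entry bound for the geometric mean and then close with the AM--GM inequality. By Lemma \ref{l2}, there is a unitary $U$ such that
\[|A-B|\leq (A+B)\# U(A+B)U^*,\]
so in particular $\langle |A-B|e_j,e_j\rangle \leq \langle [(A+B)\# U(A+B)U^*]e_j,e_j\rangle$ for every $j$.

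Next, I would extract the elementary fact that whenever $X,Y$ are positive matrices, $\langle (X\# Y)e_j,e_j\rangle \leq \langle Xe_j,e_j\rangle^{1/2}\langle Ye_j,e_j\rangle^{1/2}$. This follows immediately from the variational characterization \eqref{e5}: the block matrix $\begin{pmatrix} X & X\#Y \\ X\#Y & Y\end{pmatrix}$ is positive, and any positive $2\times 2$ block matrix $\begin{pmatrix} X & Z \\ Z^* & Y\end{pmatrix}$ satisfies $|\langle Ze_j,e_j\rangle|^2 \leq \langle Xe_j,e_j\rangle \langle Ye_j,e_j\rangle$. Applying this with $X = A+B$ and $Y = U(A+B)U^*$ gives
\[\langle |A-B|e_j,e_j\rangle \leq \langle(A+B)e_j,e_j\rangle^{1/2}\,\langle U(A+B)U^* e_j, e_j\rangle^{1/2}.\]

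Taking the product over $j=1,\dots,n$ and raising to the $2/n$-th power yields
\[\Big[\prod_j\langle|A-B|e_j,e_j\rangle\Big]^{2/n} \leq \Big[\prod_j\langle(A+B)e_j,e_j\rangle\Big]^{1/n}\Big[\prod_j\langle U(A+B)U^*e_j,e_j\rangle\Big]^{1/n}.\]
Finally, the AM--GM inequality bounds the second product on the right by the arithmetic mean of its terms, which is $\tfrac{1}{n}\mathrm{tr}\,U(A+B)U^* = \tfrac{1}{n}(\mathrm{tr}\,A + \mathrm{tr}\,B)$, giving the desired inequality.

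The main obstacle I anticipate is the geometric-mean diagonal bound; it is essentially a one-line consequence of \eqref{e5}, but one must be a little careful when $A+B$ is only positive semidefinite (handled by a standard $+\epsilon I$ perturbation) so that the geometric mean is well defined. Everything else is bookkeeping with AM--GM and Lemma \ref{l2}.
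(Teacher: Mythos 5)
Your proof is correct, and its skeleton --- Lemma \ref{l2}, then a Cauchy--Schwarz bound on the diagonal entries of the geometric mean, then AM--GM --- is the same as the paper's. The one genuine difference is in the middle step: the paper takes $n$-fold tensor powers of the inequality $|A-B|\leq (A+B)\# U(A+B)U^*$ (using that $\#$ is multiplicative under tensor products) and applies the product state $\xi(X)=\langle X(\otimes_j e_j),\otimes_j e_j\rangle$, so that a single Cauchy--Schwarz inequality of the form $\xi(X\# Y)^2\leq\xi(X)\,\xi(Y)$ produces the product over $j$ in one stroke; you instead apply the entrywise bound $\langle(X\# Y)e_j,e_j\rangle^2\leq\langle Xe_j,e_j\rangle\langle Ye_j,e_j\rangle$ separately for each $j$ and multiply. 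The two devices are equivalent in content, but yours is more elementary, since it avoids the tensor-power identity for $\#$. One further simplification is available to you: the concern about $A+B$ being only positive semidefinite is unnecessary, because the proof of Lemma \ref{l2} already exhibits the positive block matrix $\begin{pmatrix}A+B & |A-B|\\ |A-B| & U(A+B)U^*\end{pmatrix}$, and your $2\times 2$ compression argument applied directly to this matrix (with $Z=|A-B|$) yields $\langle|A-B|e_j,e_j\rangle^2\leq\langle(A+B)e_j,e_j\rangle\,\langle U(A+B)U^*e_j,e_j\rangle$ without ever invoking the geometric mean, so no $\epsilon$-perturbation is needed.
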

	\begin{proof}By lemma \ref{l2} there exists a unitary $U$ such that \[|A-B|\leq(A+B)\# U(A+B)U^*.\] Taking tensor powers, \[\otimes^n |A-B|\leq[\otimes^n(A+B)]\#[\otimes^n U(A+B)U^*].\label{e7}\tag{7}\] Let $\{e_j\}_{j=1}^n$ be the standard basis vectors of $\mathbb{C}^n$. Consider the state $\xi: \otimes^n M_n(\mathbb{C})\to\mathbb{C}$ given by \[\xi(X)=\langle X(\otimes_{j=1}^n e_j), \otimes_{j=1}^ne_j\rangle.\] Now, \[\begin{aligned}&\Pi_j\langle|A-B|e_j, e_j\rangle^2\\&=\xi\big(\otimes^n|A-B|\big)^2\\&\leq \xi\big(\otimes^n (A+B)\big)\hspace{1mm}\xi\big(\otimes^n U(A+B)U^*\big)\\&\leq\prod_j\langle(A+B)U^*e_j, U^*e_j\rangle\hspace{1mm}\prod_j\langle(A+B)e_j, e_j\rangle\\&\leq\big[\frac{\mathrm{tr}\hspace{1mm}(A+B)}{n}\big]^n\hspace{1mm} \prod_j\langle(A+B)e_j, e_j\rangle .\end{aligned}\] Taking powers of $1/n$, we are done.
	\end{proof}
	Applying Theorem \ref{t2} to the positive matrices $M$ and $M^T$, we have the following :
	\begin{cor}\label{c2}Let $\{x_j\}_{j=1}^n$ be self-adjoint elements in a $C^*$-algebra $\mathcal{A}$ and let $\phi:\mathcal{A}\to\mathbb{C}$ be a state. Let $\sigma(x_j)=\sqrt{\phi(x_j^2)}$ be the standard deviation of $x_j$. Then \[\frac{1}{4}\prod_{k=1}^n\langle |(\phi[x_i, x_j])|e_k,e_k\rangle^{2/n}\leq \big[\frac{\sum\sigma^2(x_k)}{n}\big] \big[\prod_k\sigma^2(x_k)\big]^{1/n}.\label{e8}\tag{8} \]\end{cor}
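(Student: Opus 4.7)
The plan is to apply Theorem \ref{t2} directly to the Gram matrix $M$ from \eqref{e1} and its transpose $M^T$, tracking the constants carefully. Specifically, I would set $A = M$ and $B = M^T$ and simply unpack what each ingredient of Theorem \ref{t2} evaluates to in this case.

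First I would record the three identifications needed:
\begin{enumerate}
\item $|A - B| = |M - M^T| = \tfrac{1}{2}\bigl|(\phi[x_i,x_j])\bigr|$.
\item $A + B = M + M^T = \mathrm{Cov}_{\phi}(\tilde x)$, whose $(k,k)$-entry equals $\phi(x_k^2) = \sigma^2(x_k)$, so $\langle (A+B)e_k,e_k\rangle = \sigma^2(x_k)$.
\item $\mathrm{tr}\, A + \mathrm{tr}\, B = 2\,\mathrm{tr}\, M = \sum_{k} \phi(x_k^2) = \sum_k \sigma^2(x_k)$.
\end{enumerate}

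Next I would plug these into the inequality of Theorem \ref{t2}. The left-hand side becomes
\[\Bigl[\prod_k \tfrac{1}{2}\langle |(\phi[x_i,x_j])|\,e_k,e_k\rangle\Bigr]^{2/n} = \tfrac{1}{4}\prod_k \langle |(\phi[x_i,x_j])|\,e_k,e_k\rangle^{2/n},\]
using that pulling the factor $\tfrac{1}{2}$ out of the $n$-fold product yields $(1/2)^n$, and raising to the power $2/n$ gives $1/4$. The right-hand side becomes exactly
\[\frac{\sum_k \sigma^2(x_k)}{n}\,\Bigl[\prod_k \sigma^2(x_k)\Bigr]^{1/n},\]
which is the claimed bound.

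There is essentially no obstacle beyond this bookkeeping: the whole content lies in Theorem \ref{t2}, and the corollary is just its specialization. The only place to be careful is the constant $1/4$, which comes from the combined effect of the factor $\tfrac{1}{2}$ appearing in both $M - M^T$ and the $n$-fold product followed by the exponent $2/n$; if one instead carried out the computation keeping $M - M^T$ and $M + M^T$ in terms of anticommutators and commutators of $x_i$ and $x_j$ directly, the normalizations would have to be tracked at a different step, but the answer is the same.
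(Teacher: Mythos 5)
Your proposal is correct and is exactly the paper's argument: the paper proves Corollary \ref{c2} by applying Theorem \ref{t2} to $A=M$ and $B=M^T$, and your identifications of $|M-M^T|$, the diagonal of $\mathrm{Cov}_\phi(\tilde x)$, and $2\,\mathrm{tr}\,M$, together with the bookkeeping that turns $(1/2)^n$ raised to the $2/n$ into the constant $1/4$, are precisely the (unwritten) details behind the paper's one-line proof.
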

	From inequality \ref{e8} we also get \[\frac{1}{2}\prod\langle |(\phi[x_i, x_j])|e_k,e_k\rangle^{1/n}\leq \frac{\sum\sigma^2(x_k)}{n}.\label{e9}\tag{9}\] This gives a non-vanishing lower bound for the arithmetic mean of the variances and hence, an uncertainty relation. Note that for two self-adjoint elements $x_1$ and $x_2$, the matrix $$\big|\big(\phi[x_i, x_j]\big)\big|$$ is scalar and hence, the left hand side becomes the same as $$\mathrm{det}\hspace{1mm}\big|\big(\phi[x_i, x_j]\big)\big|^{1/n}.$$ This is not the case for several self-adjoint elements, which makes it complicated. 
	\vspace{2mm}
	
	The next inequality gives another non-zero lower bound in terms of the entries $\phi[x_i, x_j]$.
	
	\begin{thm}\label{t3} Let $\mathcal{A}$ be a $C^{*}$-algebra and let $\{x_j\}_{j=1}^n$ be self-adjoint elements of $\mathcal{A}$. Let $\phi:\mathcal{A}\to\mathbb{C}$ be a state. Then \[\frac{1}{4}\prod_k\big[\sum_{i\neq k}|\phi[x_i, x_k]|^2\big]^{1/n}\leq||\mathrm{Cov}_{\phi}(\tilde{x})||\hspace{1mm}\big[\frac{\sum \sigma^2(x_k)}{n}\big]^{1/2}\big[\prod_k\sigma(x_k)\big]^{1/n}.\]\end{thm}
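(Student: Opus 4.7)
The plan is to reduce the left-hand side to a geometric mean of diagonal entries of $|M-M^T|^2$, then use the operator inequality $Q^2\leq \|Q\|\,Q$ (valid for any positive $Q$) to trade one factor of $|M-M^T|$ for its norm, bound that norm by $\|\mathrm{Cov}_\phi(\tilde x)\|$ using corollary \ref{c1}, and bound the remaining product via theorem \ref{t2} applied to $M$ and $M^T$.

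First I would note that since $M_{ij}=\tfrac12\phi(x_ix_j)$, the matrix $M-M^T$ is Hermitian (a short check: $\overline{(M-M^T)_{ji}} = \overline{\phi(x_jx_i)/2 - \phi(x_ix_j)/2} = \phi(x_ix_j)/2 - \phi(x_jx_i)/2 = (M-M^T)_{ij}$), with $(M-M^T)_{ik}=\tfrac12\phi[x_i,x_k]$. Consequently $|M-M^T|^2 = (M-M^T)^2$, and
\[
\langle |M-M^T|^2 e_k, e_k\rangle \;=\; \|(M-M^T)e_k\|^2 \;=\; \tfrac14\sum_{i\neq k}|\phi[x_i,x_k]|^2.
\]
After pulling the constant out and raising to the power $1/n$, the LHS of the claimed inequality is precisely $\prod_k\langle |M-M^T|^2 e_k, e_k\rangle^{1/n}$.

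Second, since $Q:=|M-M^T|\geq 0$ satisfies the operator inequality $Q\leq \|Q\|\,I$, multiplying on both sides by $Q^{1/2}$ yields $Q^2\leq \|Q\|\,Q$. Taking the $e_k$-expectation and the geometric mean over $k$,
\[
\prod_k\langle Q^2 e_k, e_k\rangle^{1/n} \;\leq\; \|M-M^T\|\;\prod_k\langle Q e_k, e_k\rangle^{1/n},
\]
and by corollary \ref{c1} applied to the operator norm, $\|M-M^T\|\leq \|M+M^T\| = \|\mathrm{Cov}_\phi(\tilde x)\|$.

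Finally, I apply theorem \ref{t2} to the positive matrices $A=M$ and $B=M^T$:
\[
\Big[\prod_k\langle |M-M^T|e_k, e_k\rangle\Big]^{2/n} \;\leq\; \frac{\mathrm{tr}\,M+\mathrm{tr}\,M^T}{n}\Big[\prod_k\langle(M+M^T)e_k,e_k\rangle\Big]^{1/n}.
\]
Using $(M+M^T)_{kk}=\phi(x_k^2)=\sigma^2(x_k)$ and $\mathrm{tr}\,M+\mathrm{tr}\,M^T=\sum_k\sigma^2(x_k)$, taking square roots, and combining with the previous bound delivers the claimed inequality. The only non-routine step is recognizing the LHS as a diagonal expectation of $|M-M^T|^2$ and using the $Q^2\leq \|Q\|Q$ trick to split it into quantities already controlled by the preceding results; once this is seen, everything else is a direct substitution.
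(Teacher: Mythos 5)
Your proposal is correct and follows essentially the same route as the paper: identify the left-hand side with the diagonal entries of $(M-M^T)^2$, use $\langle Q^2e_k,e_k\rangle\leq\|Q\|\langle Qe_k,e_k\rangle$ together with Corollary \ref{c1} to extract the factor $\|\mathrm{Cov}_\phi(\tilde x)\|$, and finish with Theorem \ref{t2} applied to $M$ and $M^T$ (which is exactly the content of Corollary \ref{c2}, the form the paper cites).
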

	\begin{proof} For each $k$, \[\begin{aligned}& \frac{1}{4}\sum_{i\neq k}|\phi[x_i, x_k]|^2\\&=\langle (M-M^T)^2e_k e_k\rangle\\&\leq ||M-M^T||\hspace{1mm}\langle |M-M^T|e_k, e_k\rangle\\&\leq||\mathrm{Cov}_{\phi}(\tilde{x})||\hspace{1mm}\langle |M-M^T|e_k, e_k\rangle\end{aligned}\] where the last inequality follows from corollary \ref{c1}. Taking geometric mean on both sides and using corollary \ref{c2}, we are done.\end{proof}
	
	\vspace{3mm}
	
	Given Hermitian matrices $\{H_j\}_{j=1}^n$, we now try to compare the block matrices $\big([H_i,H_j]\big)$ and $\big(\{H_i,H_j\}\big)$ instead of composing a state entrywise. Note that the block matrix $\big(H_j H_i\big)$ is not necessarily positive even though $\big(H_i H_j\big)$ always is. This is demonstrated by the following example :
	
	\begin{example}Let $\theta\in\mathbb{R}$ and let \[H_1=\begin{pmatrix}\frac{\sin\hspace{1mm} \theta}{2} & \cos\hspace{1mm} \theta\\ \cos\hspace{1mm} \theta & \sin\hspace{1mm} \theta\end{pmatrix}\] and \[H_2=\begin{pmatrix}\cos\hspace{1mm} \theta & \sin\hspace{1mm} \theta \\ \sin\hspace{1mm} \theta & \frac{\cos\hspace{1mm} \theta}{2}\end{pmatrix}.\] Let $e_1, e_2$ be the standard basis vectors of $\mathbb{C}^2$. Then the matrix \[\big(\langle H_i e_j, H_j e_i\rangle\big)\] has negative determinant and hence, is not positive.\end{example}
	\vspace{3mm}
	
	Hence, unlike the previous results, we cannot use the idea of writing both sides as the sum and difference of two positive matrices. However, we can still obtain an inequality with the trace norm such that the left hand side vanishes if and only if all the observables commute. For two Hermitian matrices $H_1$ and $H_2$, this takes the form \[\big|\big|\begin{pmatrix}O & [H_1, H_2] \\ [H_2, H_1] & O\end{pmatrix}\big|\big|_1\leq2(\mathrm{tr}\hspace{1mm}H_1^2+\mathrm{tr}\hspace{1mm}H_2^2).\] This is easy to prove, since, \[\begin{aligned}&\big|\big|\begin{pmatrix}O & [H_1, H_2] \\ [H_2, H_1] & O\end{pmatrix}\big|\big|_1\\&=2\hspace{1mm}\mathrm{tr}\hspace{1mm}\big|[H_1,H_2]\big|\\&\leq 2\big(||H_1 H_2||_1+||H_2 H_1||_1\big)\\&\leq 4\sqrt{\mathrm{tr}\hspace{1mm}H_1^2\hspace{1mm} \mathrm{tr}\hspace{1mm}H_2^2}\\&\leq2(\mathrm{tr}\hspace{1mm}H_1^2+\mathrm{tr}\hspace{1mm}H_2^2).\end{aligned}\]
	
	The next theorem generalizes this to $k$ Hermitian matrices $\{H_j\}_{j=1}^k$.
	
	\begin{thm}\label{t4} Let $\{H_j\}_{j=1}^k$ be $n\times n$ Hermitian matrices. Then \[\big|\big|\big([H_i, H_j]\big)\big|\big|_1\leq (k-1)\mathrm{tr}\hspace{1mm}\big(\{H_i, H_j\}\big).\]\end{thm}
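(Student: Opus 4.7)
The plan is to generalize the explicit two-matrix computation carried out just before the statement of the theorem, using the triangle inequality for the trace norm at the level of the block decomposition. As the paper emphasizes, one cannot express $\big([H_i, H_j]\big)$ and $\big(\{H_i, H_j\}\big)$ as the difference and sum of a common pair of positive block matrices, so the earlier ``sum versus difference of two positive matrices'' strategy is unavailable; however, passing through the block triangle inequality lets me bypass this obstruction and reduce matters directly to the scalar $k=2$ estimate already verified in the paragraph above.

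First I would decompose the block matrix as
\[\big([H_i, H_j]\big) = \sum_{i \neq j} E_{ij} \otimes [H_i, H_j],\]
where $E_{ij}$ denote the matrix units of $M_k(\mathbb{C})$ and the diagonal blocks vanish since $[H_i, H_i]=0$. Because $E_{ij}$ is a rank-one partial isometry, $||E_{ij}\otimes X||_1 = ||E_{ij}||_1\,||X||_1 = ||X||_1$ for any $X \in M_n(\mathbb{C})$, so the triangle inequality for $||\cdot||_1$ yields
\[\big|\big|\big([H_i, H_j]\big)\big|\big|_1 \leq \sum_{i \neq j} ||[H_i, H_j]||_1.\]

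Next, for each ordered pair $i \neq j$ I would repeat, verbatim, the three-step chain used in the $k=2$ computation appearing just above the theorem:
\[||[H_i, H_j]||_1 \leq ||H_iH_j||_1 + ||H_jH_i||_1 \leq 2\sqrt{\mathrm{tr}\,H_i^2 \cdot \mathrm{tr}\,H_j^2} \leq \mathrm{tr}\,H_i^2 + \mathrm{tr}\,H_j^2,\]
using the Cauchy--Schwarz inequality $||XY||_1 \leq ||X||_2\,||Y||_2$ for Schatten norms followed by AM--GM.

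Finally I would sum over ordered pairs. Each $\mathrm{tr}\,H_i^2$ is counted exactly $2(k-1)$ times, producing
\[\sum_{i \neq j}\big(\mathrm{tr}\,H_i^2 + \mathrm{tr}\,H_j^2\big) = 2(k-1)\sum_i \mathrm{tr}\,H_i^2 = (k-1)\,\mathrm{tr}\big(\{H_i, H_j\}\big),\]
where the last equality uses that the diagonal blocks of $\big(\{H_i, H_j\}\big)$ are $2H_i^2$. I do not anticipate any substantive obstacle: the only step that is not a direct repetition of the $k=2$ argument is the block triangle inequality, and this is a one-line consequence of $||E_{ij}\otimes X||_1 = ||X||_1$.
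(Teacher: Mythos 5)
Your proof is correct, but it follows a genuinely different and more elementary route than the paper's. You decompose $\big([H_i,H_j]\big)=\sum_{i\neq j}E_{ij}\otimes[H_i,H_j]$ and apply the triangle inequality block-by-block (the identity $\|E_{ij}\otimes X\|_1=\|X\|_1$ is right, since $|E_{ij}\otimes X|=E_{jj}\otimes|X|$), then run the two-matrix estimate on each ordered pair; the counting $\sum_{i\neq j}(\mathrm{tr}\,H_i^2+\mathrm{tr}\,H_j^2)=2(k-1)\sum_i\mathrm{tr}\,H_i^2=(k-1)\mathrm{tr}\big(\{H_i,H_j\}\big)$ is exact. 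The paper instead pinches $A^2$ to its block diagonal, uses operator concavity of the square root to obtain the intermediate inequality $\|A\|_1\leq\sum_i\mathrm{tr}\big(\sum_j|[H_i,H_j]|^2\big)^{1/2}$ (its inequality (10)), and then splits each block column as $Y_i-Z_i$ and applies Cauchy--Schwarz and AM--GM at the level of whole block matrices. The paper's intermediate bound is sharper than your $\sum_{i\neq j}\|[H_i,H_j]\|_1$ --- the trace norm of a block column is at most the sum of the trace norms of its blocks --- and inequality (10) has some independent interest; but since both intermediate quantities are then estimated by the same final expression, nothing is lost for the theorem as stated, and your argument is considerably shorter. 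Your proof also transfers verbatim to the von Neumann algebra setting discussed in the paper's closing remark, since it uses only the triangle inequality, the Schatten Cauchy--Schwarz inequality $\|XY\|_1\leq\|X\|_2\|Y\|_2$, and AM--GM.
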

	\begin{proof}Let $$A=\big([H_i, H_j]\big).$$ Note that $A$ is Hermitian and \[||A||_1=\mathrm{tr}\hspace{1mm}(A^2)^{1/2}.\] The matrix obtained by pinching $A^2$ along the main diagonal is given by \[D=\begin{pmatrix}-\sum_{j\neq 1}[H_1, H_j]^2 & &\\ & \ddots\\& & -\sum_{j\neq n}[H_n, H_j]^2\end{pmatrix}.\] Note that \[D=\frac{1}{2\pi}\int_{0}^{2\pi}U(\theta)A^2U(\theta)^* d\theta\] where \[U(\theta)=\begin{pmatrix}I & & &\\ & e^{i\theta}I & &\\& & \ddots &\\ & & & e^{i(k-1)\theta}I.\end{pmatrix}\] By operator concavity of the square root, \[D^{1/2}\geq \frac{1}{2\pi}\int_{0}^{2\pi}U(\theta)|A|U(\theta)^* d\theta.\] Taking traces, we get the inequality \[||A||_1\leq\sum_i\mathrm{tr}\hspace{1mm}\big(\sum_{j} \big|[H_i, H_j]\big|^2)^{1/2}\big).\label{e10}\tag{10}\] Now, for each $i$, consider the block matrix \[\tilde{H}_i=\begin{pmatrix}{}[H_i, H_1] &O  &\cdots &O\\ \vdots & & &\\{}[H_i, H_k] & O &\cdots  &O \\\end{pmatrix}.\] Note that inequality $\eqref{e10}$ now becomes \[||A||_1\leq \sum_i ||\tilde{H}_i||_1.\] Let \[Y_i = \begin{pmatrix}H_i H_1 & O &\cdots &\cdots & O\\\vdots & & & & &\\ H_i H_{i-1 } & & & &\\\\ O & & & &\\\\ H_i H_{i+1} & & & &\\\vdots & & & &\\ H_i H_k & O &\cdots &\cdots & O\\\end{pmatrix}\] and \[Z_i = \begin{pmatrix}H_1 H_i & O &\cdots &\cdots & O\\\vdots & & & & &\\ H_{i-1} H_{i} & & & &\\\\ O & & & &\\\\ H_{i+1} H_{i} & & & &\\\vdots & & & &\\ H_k H_i & O &\cdots &\cdots & O\\\end{pmatrix}\] where the $O$ in the first colum is at the $i^{\mathrm{th}}$ position. Note that $\tilde{H}_i = Y_i - Z_i$. Inequality $\eqref{e10}$ now gives \[\begin{aligned}&||A||_1\\&\leq\sum_i ||Y_i-Z_i||_1\\&\leq \sum_i ||Y_i||_1+||Z_i||_1.\end{aligned}\label{e11}\tag{11}\] Consider the $k\times k$ block matrices \[\tilde{H}=\begin{pmatrix}H_1 & O &\cdots &\cdots  & O &\\\vdots & & & & \\ H_{i-1}  & & & &\\ \\ O & & & &\\ \\ H_{i+1} & & & &\\\vdots & & & &\\ H_k & O &\cdots &\cdots & O &\\\end{pmatrix}, \hspace{2mm}\tilde{Y}_i=\begin{pmatrix}H_i & & & & & &\\ &\ddots & & & & &\\ & & H_i &  &  & &\\ & & & O & & &\\ & & & & H_i & &\\ & & & & &\ddots &\\ & & & & & & H_i\end{pmatrix}\] and \[\tilde{K}_i=\begin{pmatrix}H_i & O &\cdots &\cdots  & O &\\\vdots & & & & \\ H_{i}  & & & &\\ \\ O & & & &\\ \\ H_{i} & & & &\\\vdots & & & &\\ H_i & O &\cdots &\cdots & O &\\\end{pmatrix}, \hspace{2mm}\tilde{Z}=\begin{pmatrix}H_1 & & & & & &\\ &\ddots & & & & &\\ & & H_{i-1} &  &  & &\\ & & & O & & &\\ & & & & H_{i+1} & &\\ & & & & &\ddots &\\ & & & & & & H_k\end{pmatrix}.\] Here, $O$ is the $i^{\mathrm{th}}$ entry in the first column of $\tilde{H}$ and $\tilde{K}_i$, and the $i^{\mathrm{th}}$ diagonal entry of $\tilde{Y}_i$ and $\tilde{Z}$. Note that $Y_i = \tilde{H}\tilde{Y}_i$ and $Z_i = \tilde{K}_i\tilde{Z}$. By Cauchy-Schwarz and A.M-G.M, \[\begin{aligned}&||Y_i||_1\\&\leq||\tilde{H}||_2 ||\tilde{Y}_i||_2\\&\leq \frac{||\tilde{H}||_2^{2}+||\tilde{Y}_i||_2^{2}}{2}\\&\leq\frac{(k-1)\hspace{1mm}\mathrm{tr}\hspace{1mm}H_i^2+\sum_{j\neq i}\mathrm{tr}\hspace{1mm}H_j^2}{2}.\end{aligned}\label{e12}\tag{12}\] Similarly, \[\begin{aligned}&||Z_i||_1\\&\leq\frac{(k-1)\hspace{1mm}\mathrm{tr}\hspace{1mm}H_i^2+\sum_{j\neq i}\mathrm{tr}\hspace{1mm}H_j^2}{2}.\end{aligned}\label{e13}\tag{13}\] By $\eqref{e11}$, $\eqref{e12}$ and $\eqref{e13}$, \[\begin{aligned}&||A||_1\\&\leq(k-1)\sum_i\mathrm{tr}\hspace{1mm}H_i^2+ \sum_i \big[\sum_{j}\mathrm{tr}\hspace{1mm}H_j^2 -\mathrm{tr}\hspace{1mm}H_i^2\big]\\&=2(k-1)\sum_i \mathrm{tr}\hspace{1mm}H_i^2\\&=(k-1)\hspace{1mm}\mathrm{tr}\hspace{1mm}\big(\{H_i, H_j\}\big).\end{aligned}\]
		
	\end{proof}
	
	\begin{rem}
	\end{rem}
	
	\begin{enumerate}
		\item Note that Theorem \ref{t4} gives a refinement of corollary \ref{c1} when $\phi$ is the normalized trace on $M_n(\mathbb{C})$ and $|||.|||$ is the trace norm. In this case, the left hand side of the inequality in corollary \ref{c1} vanishes, but Theorem \ref{t4} gives a non-vanishing lower bound.
		\vspace{3mm}
		
		\item Let $\mathcal{M}$ be a von-Neumann algebra with a faithful tracial state $\tau$. Given $x\in\mathcal{M}$, its trace norm is given by \[||x||_1=\tau(|x|)\] for all $x\in\mathcal{M}$. We can lift $\tau$ to $\tau_k : M_k(\mathcal{M})\to\mathbb{C}$ via \[\tau_k (x_{ij})=\sum_{i}\tau(x_{ii}).\] Note that $\tau_k$ is positive, faithful and tracial. Let $\{x_j\}_{j=1}^k$ be self-adjoint elements of $\mathcal{M}$. In this context, Theorem \ref{t4} reads \[\big|\big|\big([x_i, x_j]\big)\big|\big|_1\leq (k-1)\tau \big({x_i, x_j}\big).\] Indeed, the same proof holds, since the only properties of the trace being used there are invariance under unitary conjugation, triangle inequality and Cauchy-Schwarz inequality, all of which hold in the von-Neumann algebra setting (see \cite{sil, nel}).\end{enumerate}
	
	\bibliographystyle{amsplain}

\begin{thebibliography}{99}
		
		\bibitem{an} A. Andai, {\it Uncertainty principle with quantum Fisher information}, J. Math. Phys. 49(1) (2008).
		
		\bibitem{rb} R. Bhatia, {\it Positive Definite Matrices}, Princeton Series in Applied Mathematics, Princeton University Press (2007).
		
		\bibitem{rb1} R. Bhatia, {\it The Riemannian mean of positive matrices}, Matrix Information Geometry (Ed. Frank Nielsen and and Rajendra Bhatia), Springer Berlin, Heidelberg (2012).
		
		\bibitem{bh2} M. Congedo, A. Barachant, R. Bhatia, {\it Riemannian geometry for EEG-based brain-computer interfaces}, Brain-Computer Interfaces 4(3) 155-174 (2017).
		
		\bibitem{sil} R. Correa da Silva, {\it Lecture notes on non-commutative $L_p$ spaces}, https://arxiv.org/abs/1803.02390 (2018)
		
		\bibitem{ghp} P. Gibilisco, F. Hiai and D. Petz, {\it Quantum covariance, quantum Fisher information and uncertainty relations}, IEEE Trans. Inform. Theory 55(1) 439-443 (2009).
		
		\bibitem{gi} P. Gibilisco, D. Imparato and T. Isola, {\it A Robertson-type uncertainty principle and quantum Fisher information}, Linear Algebra Appl. 428 (7) 1706-1724 (2008).
		
		\bibitem{hia} F. Hiai, {\it Log majorizations and norm inequalities for exponential operators}, Banach Centre Publications Vol. 38 119-181 (1997).
		
		\bibitem{jen} A. Jen\v{c}ov\'a, {\it Geodesic distances on density matrices}, J. Math. Phys. 45 1787-1794 (2004).
		
		\bibitem{ll} J. D. Lawson and Y. Lim, {\it The geometric mean, matrices, metrics and more}, Amer. Math. Monthly 108 797-812 (2001)
		
		\bibitem{nel} E. Nelson, {\it Notes on non-commutative integration}, J. Funct. Anal. 15 (103-116) (1974).
		
		
		\bibitem{pet1} D. Petz, {\it Monotone metrics on matrix spaces}, Linear Algebra Appl. 244 (81-96) (1996).
		
		\bibitem{pet} D. Petz and C. Ghinea, {\it Introduction to quantum Fisher information}, https://arxiv.org/abs/1008.2417v1 (2011).
		
		\bibitem{rob} H. P. Robertson, {\it An indeterminacy relation for several observables and its classical interpretation}, Phys. Rev. 46:794-801 (1934).
		
		\bibitem{sh} E. Schr\"{o}dinger, {\it About Heisenberg uncertainty relation (original annotation by A. Angelow and M. C. Batoni)}, Bulg. J. Phys. 26 (5-6), 193-203 (2000). 
		
	\end{thebibliography}
		
\end{document}